\theoremstyle{definition}
\newtheorem{theorem}{Theorem}
\newtheorem{lemma}[theorem]{Lemma}
\def\b0{{\bf 0}}
\def\b1{{\bf 1}}
\title{
All bipartite circulants are dispersable}
\author{Shannon Overbay, Samuel Joslin, Paul C. Kainen}
\date{}
\begin{document}
\maketitle

\abstract{
We show that a cyclic vertex order due to Yu, Shao and Li gives a dispersable book embedding for any bipartite circulant.\\

\noindent
Keywords: edge-coloring, graph drawing, 05C10, 05C15, 05C78.
}

\bigskip
\bigskip


It has been conjectured \cite{alam-2021} that vertex transitive bipartite graphs can be laid out with vertices on the unit circle, with edges as chords, such that there is a Vizing type-1  edge-coloring where no crossings are monochromatic.  Such a drawing and coloring is a {\it dispersable book embedding} \cite{bk79}.

We proposed this in 1979, omitting vertex transitivity \cite{bk79}, but in 2018, Alam et al \cite{alam-2018} found two counterexamples 
and later Alam et al. \cite{alam-2021} showed the existence of an infinite family of regular bipartite graphs, of fixed degree, which require an arbitrarily large number of edge colors to avoid monochrome edge-crossings.

Based on their examples, Alam et al \cite{alam-2021} added the qualification of vertex transitivity, 
and we further conjectured in \cite{kjo} that vertex transitive {\it non}-bipartite graphs have a type-{\it 2} edge coloring (i.e., are {\it nearly dispersable}). See \cite{jko,pck-2011,shao-geng,shao-zeling, shao-liu-li} which support both conjectures.

In \cite[Theorem 4.1]{ysl}, Yu, Shao and Li found what we call the
YSL-order; see Fig. 1.
They used it only for bipartite degree 3 and 4 circulants. In contrast, we show that 
the following holds and give a short proof.
\begin{theorem}
Any bipartite circulant with the  YSL-order
is dispersable. 
\end{theorem}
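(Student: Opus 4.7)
The plan is to exhibit an explicit $\Delta$-edge coloring and show that each color class forms a set of non-crossing chords under the YSL-order. Write $n=2m$; bipartiteness forces every generator $s\in S$ to be odd, and reading Fig.~1, the YSL-order places each even vertex $v$ at circle-position $v$ and each odd vertex $v$ at position $n-v$. For each $s\in S$ with $s<n/2$, I would split the shift-$s$ edges into two classes according to the parity of the left endpoint $i$ in $\{i,i+s\}$; if $n/2\in S$ (which occurs only when $n\equiv 2\pmod 4$), a single class suffices for that perfect matching. A parity check, using that $s$ is odd, verifies each class is a matching, and the total number of classes is $2|S|-[n/2\in S]=\Delta$.

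The crux is the non-crossing claim, and the approach is to compute positions directly. For even $i$, the edge $\{i,i+s\}$ has endpoints at circle-positions $i$ and $n-(i+s)\equiv -i-s\pmod n$; for odd $i$, the positions are $n-i\equiv -i$ and $i+s$; when $s=n/2$, both cases coincide at $n/2$. Thus every chord in a fixed color class has the same positional endpoint-sum $c\pmod n$. In the regular $n$-gon, the chord joining positions $a$ and $c-a$ is perpendicular to the line from the center at angle $\pi c/n$, so all chords with sum $c$ are mutually parallel, and parallel distinct chords inside the disk cannot cross.

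The main point requiring care --- and the only place that uses bipartiteness essentially --- is extracting the constant endpoint-sum identity. It succeeds precisely because $s$ is odd: $i$ and $i+s$ then have opposite parities, so one endpoint contributes its label $v$ and the other contributes the reflected value $-v$, and the two perturbations cancel, leaving a sum depending only on the class. Once this algebraic identity is in hand, the matching property, the color count, and the geometric observation that parallel chords are disjoint all follow easily, giving the short proof announced in the introduction.
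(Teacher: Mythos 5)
Your core computation is sound and is essentially the paper's own idea seen from the other direction: under the YSL-order one parity class keeps its cyclic sense while the other reverses it, so for an \emph{odd} jump $s$ the two endpoints of an edge contribute $+v$ and $-v$ to the positional sum, which is therefore constant ($\equiv\pm s$) on each of your classes; constant sum gives parallel chords, and distinct parallel chords neither cross nor share an endpoint. The paper instead starts from the maximal parallel families and checks, by comparing an edge with its first and second neighbours in the family, that the $C_n$-distance between endpoints is constant; your coordinate version is arguably cleaner, and your count $2|S|-[\,n/2\in S\,]=\Delta$ of the classes is correct.

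There is, however, a genuine gap in your very first sentence: bipartiteness does \emph{not} force every generator to be odd. By Heuberger's criterion (Theorem 3 in the paper), $C(n,\{a_1,\dots,a_m\})$ is bipartite iff some $2^\ell$ divides every $a_j$ with $2^{\ell+1}\mid n$ and $2^{\ell+1}\nmid a_j$; only $\ell=0$ yields odd jumps. The paper's own example $C(36,\{6,18\})$ is bipartite with all generators even, and $C(8,\{2\})$ (two $4$-cycles) shows the issue is not cosmetic: for even $s$ the endpoints of an edge have the same parity, the positional sums $\pm(2i+s)$ are no longer constant, the chords of a class are not parallel, and in fact the YSL-order applied to all eight vertices of $C(8,\{2\})$ forces a monochromatic crossing with two colors (the chord at positions $\{1,3\}$ crosses both chords incident to position $2$, which must receive different colors). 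The missing ingredient is exactly the reduction the paper performs before its Theorem 2: by Boesch--Tindell, such a circulant is a disjoint union of $\gcd(n,a_1,\dots,a_m)$ copies of a circulant with odd jumps, a disjoint union of copies of a dispersable graph is dispersable, and the YSL-order is then applied to each copy separately. With that reduction prepended, your argument goes through.
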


The theorem is additional evidence for the conjecture of \cite{alam-2021}. Circulants have been one of the more widely used graph-types and so the theorem might have interesting applications.  See, e.g., \cite{harary-circ, circ2022, hwang, jok}. 

In fact, the YSL-order also gives dispersable embeddings for the Franklin and Heawood graphs but not for the Desargues graph.

\bigskip

We now briefly sketch some definitions for the reader's convenience.

A graph $G=(V,E)$ is a {\bf circulant} if $V = \{1,\ldots, n\}$, $n \geq 3$, and there exists 
$S \subseteq \{1, 2, \ldots, \lfloor n/2 \rfloor\}$ such that
$E = \{ij : j=i+s, s \in S\}$, where addition is mod $n$.  Such a circulant is denoted $C(n,S)$ and the elements in $S$ are the {\bf jump-lengths}. 
The ${\bf C}_{\bf n}$-{\bf distance} between two vertices $u, w$ of a circulant $C(n,S)$ is the graph-theoretic distance in $C_n$; for instance, the $C_{16}$-distance from 2 to 13 is 5.

A graph $G$ is {\bf dispersable} \cite{bk79} if it has an outerplane drawing (crossings allowed) and an edge-coloring with $\Delta(G)$ colors ($\Delta =$ maximum degree) such that two edges of the same color neither cross nor share an endpoint.  The color-classes (or {\bf pages}) are {\it matchings} for this book embedding.





\begin{figure}[ht!]
\centering
\includegraphics[width=90mm]{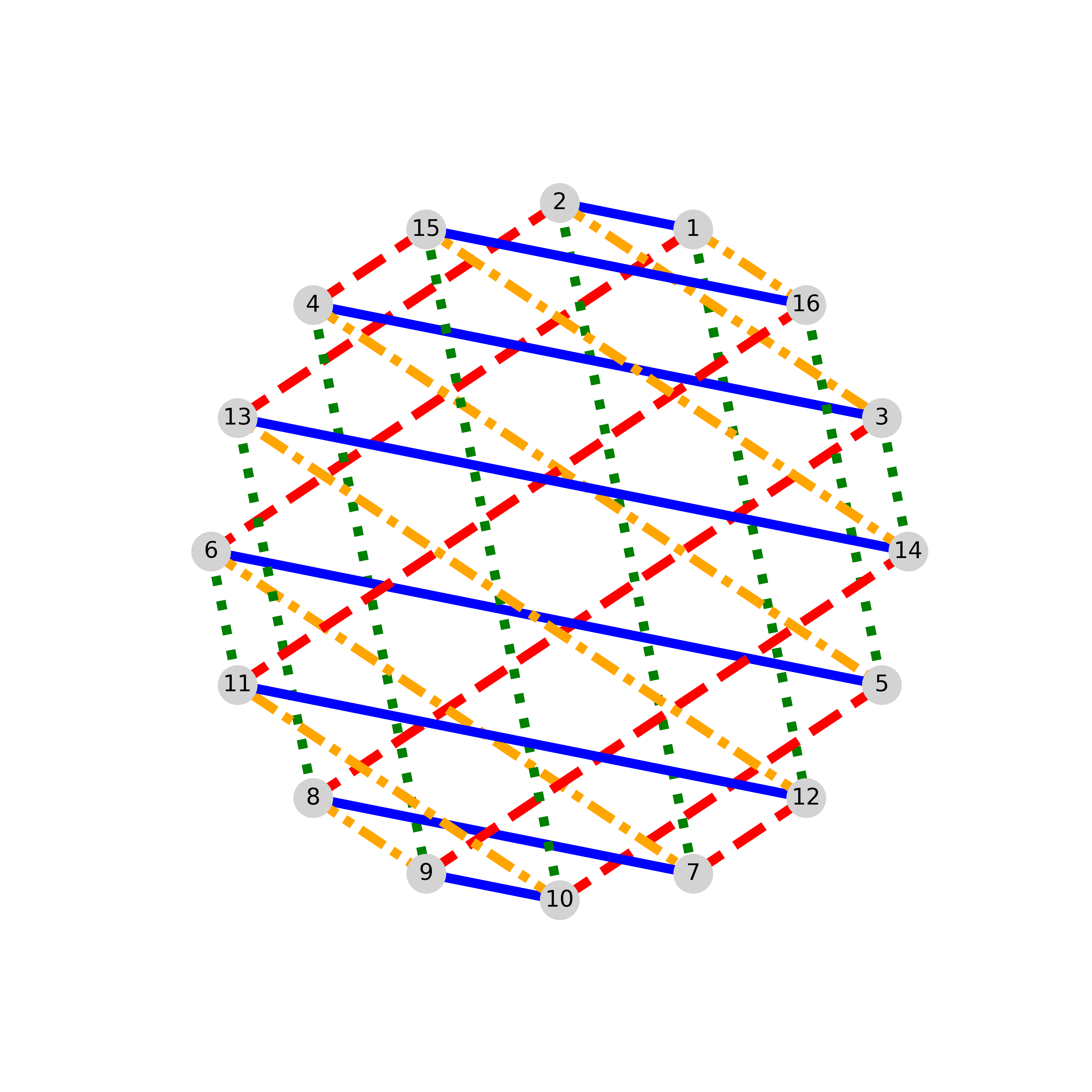}
\caption{ YSL-order for $C(16,\{1,3,5,7\})$. Only 1 and 5 are shown.
\label{fig-12reg}}
\end{figure}

In a {\bf YSL-order}, the natural clockwise cyclic order $1, 2, \ldots, 2k$ is permuted so that the odd vertices remain, in clockwise order, at the odd positions, while  the even vertices are placed at the even-indexed positions but in {\it counterclockwise} order. Our argument below shows that it doesn't matter where 2 is placed.  In \cite{ysl}, Yu et al put 2 immediately counterclockwise of 1, so in clockwise direction, their order is
$$ \dots, 2k-3, 4, 2k-1, 2, 1, 2k, 3, 2k-2, 5, \ldots$$



Using characterizations of bipartite circulants by Heuberger \cite{heuberger} and of connected circulants by Boesch and Tindell \cite{boesch-tindell}, and the fact that a disjoint union of identical subgraphs is dispersable if the subgraph is dispersable, one sees that to prove Theorem 1, 
it suffices to show the following.

\begin{theorem}
Let $n=2k$, $k \geq 2$, and let $\mu(k)$ be the largest odd number not exceeding $k$. Then $C = C(n, \{1, 3, \ldots, \mu(k)\})$ is dispersable under a YSL order, and the edges in each page correspond to a single jump-length.
\end{theorem}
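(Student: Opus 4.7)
My plan is to exhibit the pages explicitly and reduce the non-crossing condition to one modular invariant. Index the spine positions $1,\dots,n$ cyclically; then YSL places odd vertex $2j-1$ at position $2j-1$, and (up to the cyclic freedom in where $2$ is placed) places even vertex $2m$ at position $2k-2m+2 \pmod n$.

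For each jump $s\in S$ with $s < n/2$, I would form two pages
\[
P_s^+ = \bigl\{\{2j-1,\,2j-1+s\}:1\le j\le k\bigr\},\qquad P_s^- = \bigl\{\{2j,\,2j+s\}:1\le j\le k\bigr\},
\]
each of which is a perfect matching, since $s$ is odd and shifting by $s$ bijects the odd and even residue classes of $\mathbb{Z}_n$. When $k$ is odd and $s=k=n/2$, the jump-$s$ edges already constitute a single perfect matching, which I would take as one page. A direct count shows the total number of pages is exactly $\Delta(C)=k$ in both parities of $k$, so this is a Vizing type-$1$ colouring with each page labelled by its jump-length, matching the theorem's conclusion.

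The heart of the argument is the observation that, on any one of these pages, the positions of the two endpoints of every edge sum to the same value modulo $n$. For $\{2j-1,\,2j-1+s\}\in P_s^+$, writing the even endpoint as $2(j+(s-1)/2)$ and substituting the YSL formulas gives
\[
(2j-1) + \bigl(2k - 2(j+(s-1)/2) + 2\bigr) \;\equiv\; 2-s \pmod n,
\]
independent of $j$; analogous computations yield constants $s+2$ for $P_s^-$ and $k+2$ for the diametral page. Any cyclic shift of where $2$ is placed adds the same constant to every even-position value and so translates these sums uniformly, preserving the invariant.

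To finish, I would invoke the elementary geometric fact that two chords of a circle whose endpoint indices sum to a common value $c \pmod n$ lie on lines perpendicular to the diameter at angle $\pi c/n$, hence are parallel and pairwise non-crossing. Consequently each page is a non-crossing matching, which yields the claimed dispersable embedding. There is no real obstacle beyond spotting the constant-sum invariant; the only small care is the bookkeeping for the diametral jump $s=n/2$, which is handled by the translation remark above.
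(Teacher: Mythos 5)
Your proposal is correct. It proves the same structural fact as the paper --- that under the YSL order the parallel chord classes coincide with the (orientation-split) jump-length classes --- but runs the argument in the opposite direction and with explicit coordinates. The paper \emph{defines} each page as a maximal parallel family of chords, orders its edges by a transversal line, and shows by a local step that consecutive edges $ab$ and $a'b'$ in a family satisfy $a'=a\pm 2$, $b'=b\pm 2$ with agreeing signs (because odd labels advance clockwise while even labels advance counterclockwise), so the $C_n$-distance, i.e.\ the jump-length, is constant along the family. You instead \emph{define} the pages by jump-length, split each non-diametral jump $s$ into the two matchings $P_s^{+}$ and $P_s^{-}$ according to the parity of the base vertex, and verify parallelism globally via the constant endpoint-position-sum invariant ($2-s$, $s+2$, and $k+2$ respectively), which is exactly the standard criterion for chords of a regular $n$-gon to be parallel. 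Your computations check out (including the page count $k=\Delta(C)$ in both parities of $k$ and the diametral case $s=k$ odd), and your write-up makes explicit two points the paper leaves implicit: that each page is a perfect matching and that the number of pages equals the maximum degree. The paper's local $\pm 2$ argument is shorter and avoids fixing coordinates (hence the offhand remark that the placement of vertex $2$ is irrelevant, which you recover by noting the sums translate uniformly); your coordinate version is more self-contained and exhibits the pages and their directions explicitly.
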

\begin{proof}
Each page consists of a maximal parallel family of edges which we enumerate $1, \ldots, k$ in the order they intersect some orthogonal line.  We show that (i) the $C_n$-distance between the endpoints of edge 1 and edge 2 are equal and (ii) the same holds for edge $j$ and edge $j+2$, $1 \leq j \leq n-2$.  

For (i), suppose that $ab$ and $a'b'$ are edges 1 and 2, with $(a', a, b, b')$ occurring consecutively clockwise.  Then $a' = b \pm 2$ and $b' = a \pm 2$, where the signs agree because vertices alternate odd/even and the ordering is opposite for the two parities.
Hence, the $C_n$ distance from $a$ to $b$ is the same as the $C_n$ distance from $b'$ to $a'$.  But distance is symmetric.

For (ii), suppose that $ab$ and $a'b'$ are edges $j$ and $j+2$, with 
$$(a', x, a,\ldots, b, y, b')$$ 
occurring consecutively clockwise but $a$ and $b$ can be nonconsecutive. Then $a' = a \pm 2$ and $b' = b \pm 2$, where again both signs must be equal.
\end{proof}

For the reader's convenience, here are
the cited theorems.

\begin{theorem}[Heuberger \cite{heuberger}]
Let $C := C(n,\{a_1,\ldots,a_m\})$. Then
$C$ is bipartite if and only if there exists 
$\ell \in \mathbb{N}$ such that $2^\ell | a_1, \ldots, a_m$, $2^{\ell+1} | n$, but $2^{\ell + 1}$ does {\it not} divide any of the $a_j$.
\end{theorem}

For $t$ a positive integer, let $t H$ denote $t$ disjoint copies of graph $H$.
\begin{theorem}[Boesch \& Tindell \cite{boesch-tindell}]
Let $C := C(n,\{a_1,\ldots,a_m\})$ be a circulant.   Then $C = r \, C(n/r, \{a_1/r, \ldots, a_m/r\})$,
$r := \gcd(n, a_1, \dots, a_m)$.
\end{theorem}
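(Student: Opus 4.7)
The plan is to exhibit an explicit isomorphism between $C := C(n, \{a_1, \ldots, a_m\})$ and $r$ vertex-disjoint copies of $C' := C(n/r, \{a_1/r, \ldots, a_m/r\})$ via reduction modulo $r$. First, I would identify the vertex set with $\mathbb{Z}/n\mathbb{Z}$ and partition it into the $r$ residue classes $V_c = \{c, c+r, \ldots, c+(n/r-1)r\}$ for $c = 0, \ldots, r-1$. Since $r$ divides every $a_j$, each edge $\{i, i+a_j\}$ of $C$ lies inside a single $V_c$; consequently there are no edges between distinct classes and $C$ is the disjoint union of the induced subgraphs $C[V_c]$.

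Next, I would define for each $c$ the bijection $\phi_c \colon V_c \to \mathbb{Z}/(n/r)\mathbb{Z}$ by $\phi_c(c + ir) = i$ and check that it carries $C[V_c]$ isomorphically onto $C'$: the edge $\{c + ir,\; c + ir + a_j\}$ is sent to $\{i,\; i + a_j/r\} \pmod{n/r}$, an edge of $C'$ with jump-length $a_j/r$, and every edge of $C'$ arises in this way. Since the $r$ induced subgraphs are pairwise disjoint and each is isomorphic to $C'$, we conclude $C = r\, C'$.

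The only thing requiring care, and this is essentially bookkeeping, is confirming that $C'$ is a legitimate circulant under the paper's conventions. The numbers $a_j/r$ are distinct positive integers (distinctness being inherited from the $a_j$), and each either already lies in $\{1, \ldots, \lfloor (n/r)/2 \rfloor\}$ or can be replaced by its complement $n/r - a_j/r$ without changing the edge set of $C'$. It is also worth remarking that $\gcd(n/r, a_1/r, \ldots, a_m/r) = 1$ by the definition of $r$, which via the standard Cayley-graph criterion forces each copy of $C'$ to be connected; thus the $r$ pieces produced are genuinely the connected components of $C$, so no further collapsing is possible. I do not anticipate any real obstacle beyond keeping this normalization consistent with the definition of a circulant given earlier in the paper.
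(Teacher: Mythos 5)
Your proof is correct and is the standard argument: partition the vertices of $\mathbb{Z}/n\mathbb{Z}$ into the $r$ residue classes mod $r$, observe that every jump $a_j$ preserves the class because $r \mid a_j$, and check that division by $r$ gives an isomorphism from each induced subgraph onto $C(n/r,\{a_1/r,\ldots,a_m/r\})$. The paper itself offers no proof of this statement --- it is simply cited from Boesch and Tindell --- so your write-up supplies a self-contained justification rather than diverging from an existing one. One small simplification: the complementation step you flag as ``requiring care'' is vacuous, since $a_j \le \lfloor n/2\rfloor$ already forces $a_j/r \le \lfloor (n/r)/2\rfloor$; and your closing observation that $\gcd(n/r, a_1/r,\ldots,a_m/r)=1$, hence that the $r$ copies are exactly the connected components, is a correct and worthwhile addition that the bare statement does not claim.
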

If $C$ is bipartite, then $r = 2^\ell u$ for $u$ odd, with $\ell$ as in Theorem 3.

Each maximal family of parallel edges, under the YSL ordering, has constant jump-length which could represent delay, allowing delay to depend on direction.  In contrast, for Overbay's ordering \cite[p 82]{so-thesis}, for a family of parallel edges, jump-lengths vary palindromically and unimodally $1,3,\ldots, \mu(k), \ldots, 3,1$, with $\mu(k)$ repeated if and only if $k$ is even, as one traverses them with respect to a perpendicular line. See Fig. 2.

\begin{figure}[ht!]
\centering
\includegraphics[width=60mm]{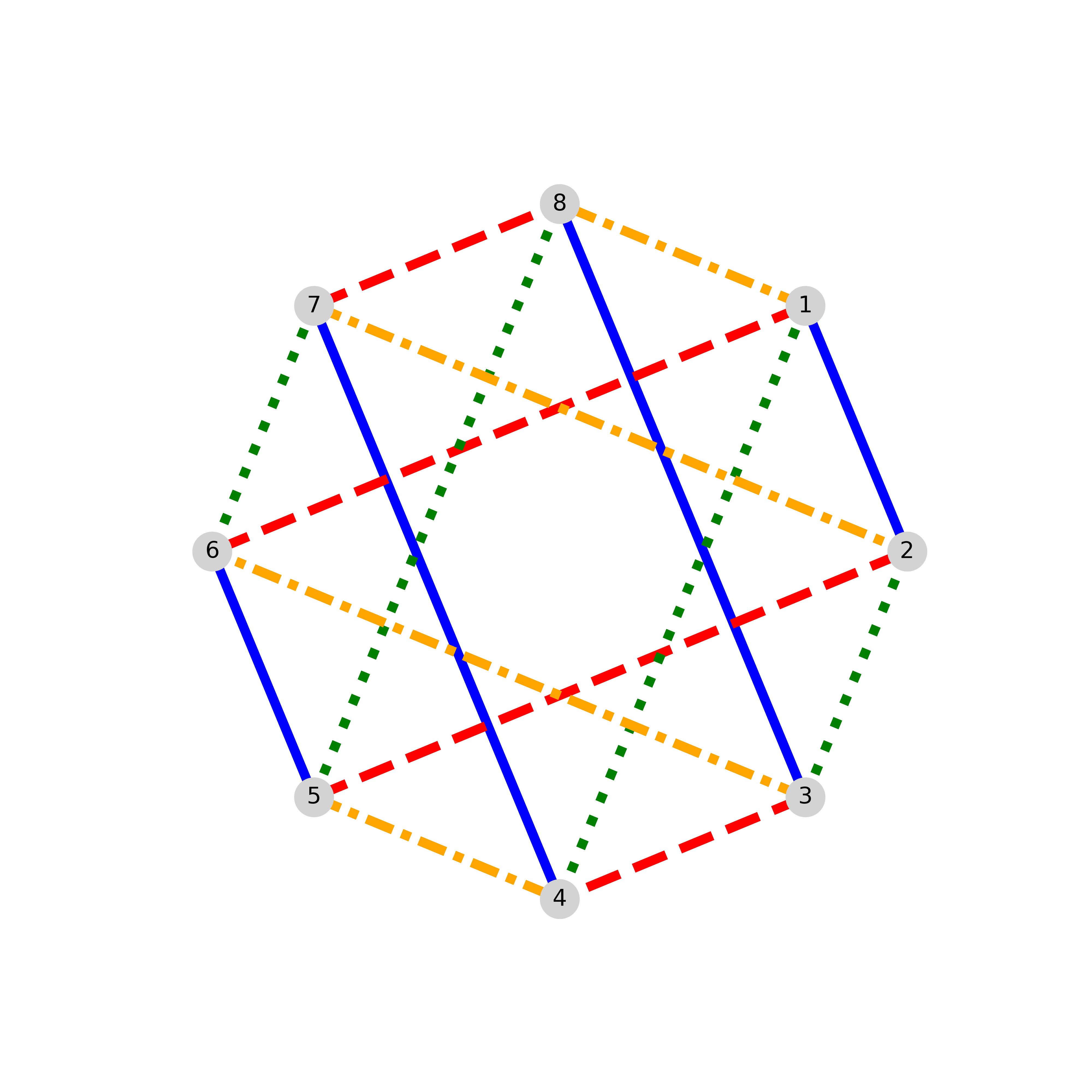}
\includegraphics[width=60mm]{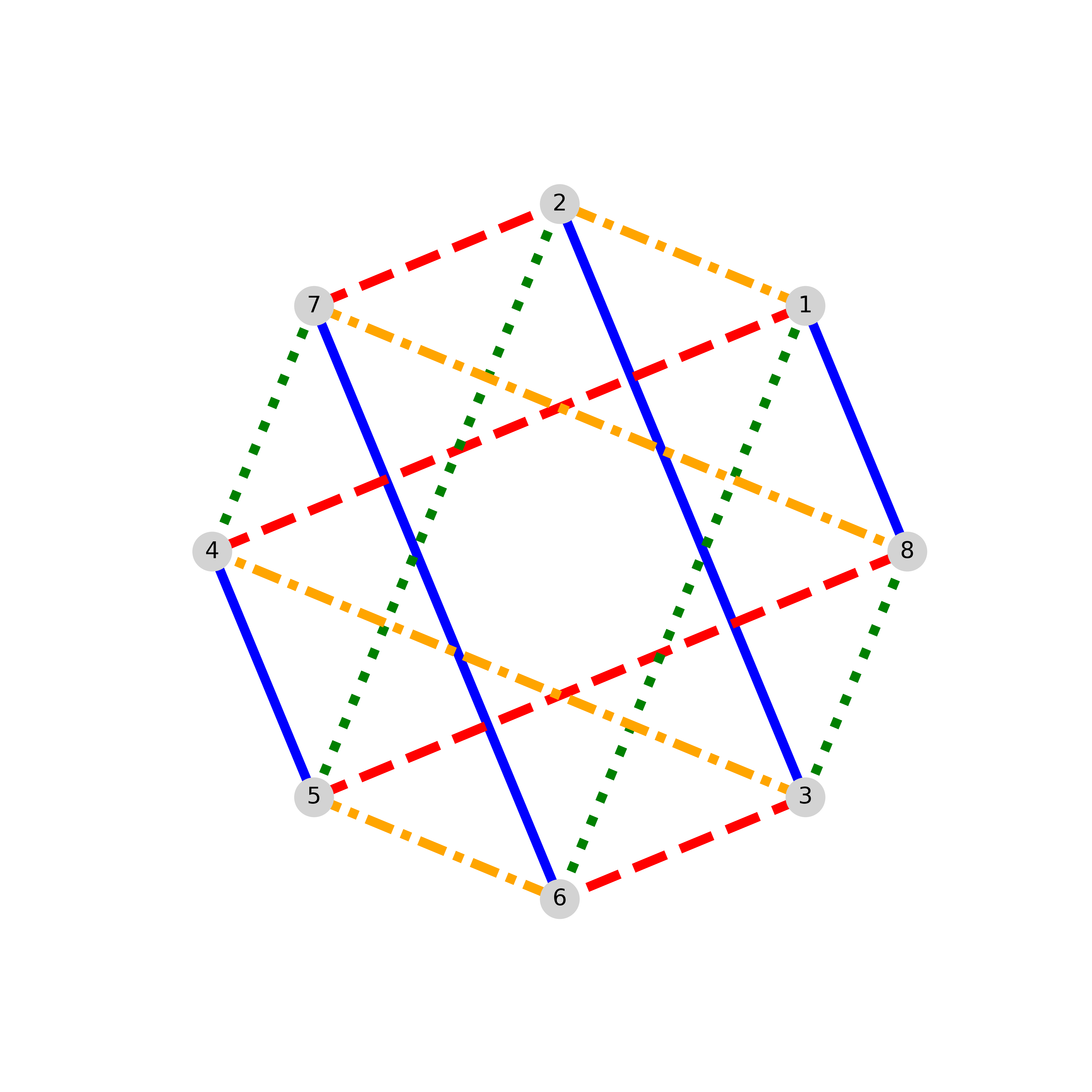}

\caption{ Overbay and YSL orders for $C(8,\{1,3\})$.\label{fig-12reg}}
\end{figure}


In applications, the bipartition could be input and output vertices.

\end{document}